\documentclass[11pt,reqno,letterpaper]{amsart}

\usepackage[margin=1in]{geometry}
\usepackage[T1]{fontenc}
\usepackage{amsmath,amssymb,amsthm,mathtools}
\usepackage{tikz}
\usetikzlibrary{arrows.meta,decorations.pathreplacing,positioning,calc}
\usepackage[labelfont=bf]{caption}
\usepackage{aliascnt}
\usepackage[colorlinks=true,allcolors=blue,hypertexnames=false]{hyperref}
\numberwithin{equation}{section}

\newtheorem{theorem}{Theorem}[section]

\newaliascnt{proposition}{theorem}
\newtheorem{proposition}[proposition]{Proposition}
\aliascntresetthe{proposition}

\newaliascnt{lemma}{theorem}
\newtheorem{lemma}[lemma]{Lemma}
\aliascntresetthe{lemma}

\newaliascnt{remark}{theorem}
\newtheorem{remark}[remark]{Remark}
\aliascntresetthe{remark}

\usepackage[noabbrev,capitalize,nameinlink]{cleveref}
\crefname{theorem}{Theorem}{Theorems}
\crefname{proposition}{Proposition}{Propositions}
\crefname{lemma}{Lemma}{Lemmas}
\crefname{remark}{Remark}{Remarks}

\newcommand{\Z}{\mathbb Z}

\newcommand{\eps}{\varepsilon}
\newcommand{\mc}{\mathcal}
\newcommand{\mb}{\mathbb}
\newcommand{\abs}[1]{\left|#1\right|}
\newcommand{\set}[1]{\left\{#1\right\}}
\newcommand{\paren}[1]{\left(#1\right)}
\newcommand{\ind}{\mathbf 1}

\title{The Size of the Spanning-Tree Spectrum of Simple Graphs}

\author{Vishesh Jain}
\address{Department of Mathematics, Statistics, and Computer Science, University of Illinois Chicago, Chicago, IL 60607, USA}
\email{visheshj@uic.edu}

\begin{document}

\begin{abstract}
For a graph $G$, let $\tau(G)$ denote the number of spanning trees. We show that for every fixed $0 < c < 1/4$, the number of distinct values of $\tau(G)$, as $G$ ranges over simple graphs on $n$ vertices, is at least $\exp(c n \log n)$ for all sufficiently large $n$. This is optimal up to the choice of the constant $c$ and resolves a conjecture of Chan-Kontorovich-Pak regarding a problem of Sedl\'a\v{c}ek from the late 1960s. 
\end{abstract}

\maketitle

\section{Introduction}\label{sec:introduction}

For a finite graph \(G\), let \(\tau(G)\) denote the number of spanning trees of
\(G\). For a family of simple graphs $\mc{G}$, we define its spanning-tree spectrum to be
\[\mc{T}(\mc{G}) := \set{\tau(G): G \in \mc{G}}.\]
In this paper, we study the spanning-tree spectrum of the family of simple graphs on $n$ vertices:
\[
\mc T_n:=\set{\tau(G): G \text{ is a simple graph on } n \text{ vertices}}.
\]
By Cayley's formula, every simple graph on \(n\)
vertices has at most \(n^{n-2}\) spanning trees, and therefore
\[
\abs{\mc T_n}\le n^{n-2}+1=\exp\bigl((1+o(1))n\log n\bigr).
\]
The main result of this paper gives a lower bound of the same shape.

\begin{theorem}\label{thm:main}
For every fixed \(0<c<1/4\), and all sufficiently large \(n\),
\[
|\mc{T}_n|
    \ge \exp(cn\log n).
\]
\end{theorem}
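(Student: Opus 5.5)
We need many distinct values of \(\tau(G)\), and the plan is to build them by gluing small pieces whose spanning-tree counts combine in a controlled, injective way. The guiding facts are multiplicativity and additivity. If \(G\) is obtained from graphs \(G_1,\dots,G_k\) by identifying one vertex of each (a block/cut-vertex gluing), then \(\tau(G)=\prod_i\tau(G_i)\). If two graphs are joined by a two-vertex gluing, the count of the result is a bilinear expression in the counts of the pieces and of their contractions. A product of \(k\) numbers from a set of size \(m\) gives at most \(\binom{m+k}{k}\) distinct values, which is too few. We therefore need a positional (digit) encoding, in which \(\tau(G)\) behaves like \(\sum_i a_i B^i\) or a continued-fraction-like expression with freely chosen digits \(a_i\). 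The overall target is to realize about \(n^{cn}\) distinct values using \(n\) vertices.

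\textbf{Construction.} Take a ``spine'': a path-like chain in which consecutive nodes are joined through gadgets. Series-parallel networks are natural here, since for two-terminal graphs the pair \((\tau(G),\tau(G/st))\) transforms under series and parallel composition by explicit \(2\times 2\) integer matrices. A chain of \(k\) gadgets then gives \(\tau\) as an entry of a product \(M_1M_2\cdots M_k\) of matrices \(M_i=\begin{pmatrix}1&a_i\\0&1\end{pmatrix}\)-like, i.e. continuants of the sequence \((a_i)\). Continuants of distinct sequences of positive integers are distinct up to reversal, by uniqueness of continued fraction expansions. So if each gadget \(i\) can realize a digit \(a_i\) from a set of size about \(n^{1/4}\) or more using \(O(1)\) vertices on average, we get about \(\exp(\tfrac14 n\log n)\) values.

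\textbf{Gadgets and vertex budget.} Use \(k\approx n/2\) gadgets and let the digit \(a_i\) be encoded by attaching to a hub a set of parallel paths, or edges from a shared pool of vertices. A cleaner realization is a bipartite-like gadget: a spine vertex \(v_i\) adjacent to a chosen subset of a common pool \(P\) of \(\sim n/2\) vertices. Then \(\tau\) depends on the neighbourhoods \(N(v_i)\subseteq P\), and the number of choices of neighbourhood sizes alone is \((n/2)^{n/2}\). Distinguishing these values requires an injectivity argument. I would use an almost-complete base graph, e.g. \(K_m\) plus pendant-like vertices \(v_i\) with \(d_i=|N(v_i)|\). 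The matrix-tree theorem then gives \(\tau\approx m^{m-2}\prod d_i\cdot(1+\text{small corrections})\) via a Schur complement. Such pure products have few distinct values, so we instead perturb and exploit the correction terms. The cleaner route is to insist on a continuant structure, where the digits \(a_i\in[1,\sqrt n]\) say each cost \(O(\sqrt n)\)... which is too expensive. So the budget must be shared: each digit's vertices must be reused across gadgets. This tension is the crux, and I would aim for digits of size \(n^{1/2}\) at cost \(2\) vertices each amortized, giving exponent \(\tfrac12\cdot\tfrac12=\tfrac14\). That this product of the digit exponent and the amortized cost is exactly \(1/4\) is consistent with the stated constant.

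\textbf{Main obstacle.} The hard step is proving injectivity, i.e. that distinct choices of digits produce distinct \(\tau\), when gadgets share vertices and the counts are no longer exact continuants. I would first try to arrange exactness by using the pool vertices only inside the local gadget, so that the gadget graphs are genuinely two-terminal and the continuant formula holds. If sharing is unavoidable, I would fall back to a counting argument. We lower-bound the number of realizable tuples and upper-bound the number of tuples mapping to a single \(\tau\) value, for instance by showing \(\tau\) determines the digits modulo small primes or up to \(e^{o(n\log n)}\) ambiguity. Losing \(e^{o(n\log n)}\) is acceptable, which is why any \(c<1/4\) suffices.
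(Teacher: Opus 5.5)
Your proposal does not yet contain a proof. The step you yourself call the crux, getting digit information out of $\tau$ once the gadgets share a vertex pool, is left open. Your first remedy (keeping pool vertices local so the two-terminal continuant formula stays exact) gives up precisely the sharing your own budget analysis says is needed. Your fallback (``$\tau$ determines the digits up to $e^{o(n\log n)}$ ambiguity'') is essentially the theorem restated. With a shared pool, the Matrix--Tree cofactor couples every spine vertex through the pool block, so $\tau$ is no longer a continuant and you have no handle on it.

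The missing idea is to ask not that $\tau$ determine the digits, but only that $\tau$ be \emph{divisible} by something that does. In the paper, a star with centre $\rho$ and leaves $a_1,\dots,a_q$ is shared by all paths. The $i$-th vertex of a path is joined to $a_1,\dots,a_{h_i}$ with $h_i=w_i-\deg_{\mathrm{path}}(i)$. So after deleting $\rho$, the path block of the cofactor is exactly the tridiagonal $T_m(w_1,\dots,w_m)$. The coupling through the anchors is then neutralised by duplicating each path with identical attachments, via
\[
\det\begin{pmatrix}A&0&E\\0&A&E\\G&G&H\end{pmatrix}=\det A\cdot\det\begin{pmatrix}A&E\\2G&H\end{pmatrix}.
\]
Doing this for a path on $m$ vertices and a path on $m-1$ vertices gives $D_w=K_m(w_1,\dots,w_m)K_{m-1}(w_1,\dots,w_{m-1})\mid\tau(G_w)$ on $4m+q-1$ vertices. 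Two divisor-bound steps finish the argument:
\begin{itemize}
\item a fibre of $w\mapsto D_w$ has size at most $\sigma_0(D_w)$, since the ordered pair $(K_m,K_{m-1})$ determines $w$;
\item a value $\tau\le N^N$ has at most $N^{o(N)}$ divisors, so at most that many distinct $D_w$ can sit under one value of $\tau$.
\end{itemize}
With $q=\lfloor\varepsilon m\rfloor$ this gives exponent $1/(4+\varepsilon)$. That comes from digits of size linear in $n$ at an amortised cost of four vertices per position, not from the $\tfrac12\cdot\tfrac12$ split you posit.

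Separately, the injectivity claim you rely on is false. A single continuant does not determine its sequence even up to reversal: with the usual convention, $K(1,4)=K(2,2)=5$. Uniqueness of continued-fraction expansions recovers the word from the rational $K_r/K_{r-1}$, i.e.\ from the ordered pair $(K_r,K_{r-1})$, not from the numerator alone. You give no argument bounding how many words share a numerator. So even an exact chain with $\tau$ equal to one entry of $M_1\cdots M_k$ would not yield the count as you state it. This is exactly why the paper builds both $K_m$ and $K_{m-1}$ into $\tau$. The only remaining ambiguity is then how $D_w$ splits as an ordered product, which costs just a divisor-function factor.
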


\subsection{Background}

The problem of understanding the size of the spanning-tree spectrum
goes back to Sedl\'a\v{c}ek, who initiated the study of spanning-tree spectra
for several natural graph classes, including all simple graphs, planar graphs,
and regular graphs \cite{sedlacek66,sedlacek69,sedlacek70minimal,sedlacek70regular} in the late 1960s.
The question was also recorded early in Moon's monograph on labelled trees
\cite{moon}.  Until recently, however, even proving exponential growth for the
spanning-tree spectrum of planar graphs was open.

Let \(\mc G_n^{\rm pl}\) denote the family of connected simple planar graphs on
\(n\) vertices. Since a planar graph on $n\geq 3$ vertices has at most $3n-6$ edges, $\tau(G) \leq \binom{3n-6}{n-1}$ for \(G\in \mc G_n^{\rm pl}\). Thus, the spanning-tree spectrum for planar graphs grows at most exponentially in $n$; the current best known upper bound is $5.2852^n$ due to Buchin--Schulz \cite{buchin-schulz}.  On the lower-bound side, Sedl\'a\v{c}ek \cite{sedlacek69}
proved a polynomial lower bound, namely
\[
\abs{\mc T(\mc G_n^{\rm pl})}=\Omega(n^2),
\]
and Azarija \cite{azarija} improved this to
\[
\abs{\mc T(\mc G_n^{\rm pl})}\ge
2^{\Omega(\sqrt{n/\log n})}.
\]
A result of Stong \cite{stong} on the dual problem of constructing small
graphs with a prescribed number of spanning trees implies the stronger bound
\[
\abs{\mc T(\mc G_n^{\rm pl})}\ge 2^{\Omega(n^{2/3})}.
\]
In a remarkable paper, the exponential growth of the spanning-tree spectrum of planar graphs was finally settled in the affirmative by Chan--Kontorovich--Pak \cite{ckp}, who proved that
\[
\abs{\mc T(\mc G_n^{\rm pl})}\ge 1.1103^n
\]
for all sufficiently large \(n\). 
Their argument combines a
deletion--contraction construction for marked planar graphs, continued-fraction
machinery, and Diophantine input related to Zaremba's conjecture.

Subsequently, Alon--Buci\'c--Gishboliner \cite{abg} gave a short,
purely combinatorial proof of the stronger bound
\[
\abs{\mc T(\mc G_n^{\rm pl})}\ge 1.55^n.
\]
In the same work, they also proved that, for every fixed \(k\ge3\), the
spanning-tree spectrum of connected \(k\)-regular graphs on \(n\) vertices has
exponential size whenever \(kn\) is even, determining the correct growth order
in another problem first raised by Sedl\'a\v{c}ek.

\medskip

The unrestricted simple-graph case is qualitatively different.  For planar
graphs, and more generally for graph families with bounded average degree, the
number of spanning trees of an \(n\)-vertex graph is at most exponential in
\(n\).  For all simple graphs, however, Cayley's formula leaves open the much
larger scale
\[
\abs{\mc T_n}\le n^{n-2}+1=\exp((1+o(1))n\log n).
\]
Until now, no lower bound on this larger scale was known.  As
Alon--Buci\'c--Gishboliner \cite{abg} point out, it was ``somewhat remarkable''
that, historically, the essentially best known lower bounds for the unrestricted
spanning-tree spectrum all came from planar constructions, despite the much
greater freedom available in arbitrary simple graphs.  In particular, any
superexponential lower bound for \(\abs{\mc T_n}\) must use genuinely non-planar
degrees of freedom.

This makes it natural to conjecture that the unrestricted spectrum has the same
\(n\log n\) order as the Cayley upper bound.  Chan--Kontorovich--Pak made this
explicit in \cite[Conjecture~5.2]{ckp}, conjecturing that
\[
\abs{\mc T_n}=\exp(\Omega(n\log n)).
\]
\cref{thm:main} proves this conjecture. 

\medskip

It is useful to compare this with the corresponding range problem for
\(0\)-\(1\) determinants.  Let \(\sigma(n)\) be the set of positive values
attained by determinants of \(n\times n\) matrices with entries in
\(\{0,1\}\), and let \(b(n)\) be the largest integer such that
\[
\{1,2,\dots,b(n)\}\subseteq \sigma(n).
\]
Hadamard's inequality gives the upper bound
\[
|\sigma(n)|,\ b(n)\le
\exp\bigl((1/2+o(1))n\log n\bigr).
\]
In a recent breakthrough, Shitov \cite{shitov} proved that this upper bound is sharp up to the lower-order
term in the exponent: for all sufficiently large \(n\),
\[
\{0,1,\dots,\lfloor
\exp(0.5\,n\log n-7n\log\log n)
\rfloor\}
\]
is contained in the set of determinants of \(n\times n\) \(0\)-\(1\) matrices. Thus the \(0\)-\(1\) determinant problem has been solved not only for
the size of the range, but also in the stronger interval sense, and with the correct constant in the exponent.

By Kirchhoff's Matrix--Tree Theorem, spanning-tree counts are also determinant
values, but of a much more constrained kind: they are determinants of Laplacian
cofactors of simple graphs. In this sense, \cref{thm:main} may be viewed as a
Matrix--Tree analogue of the \(0\)-\(1\) determinant range problem, in
that it proves the correct \(n\log n\) order for the size of the range.  There
are, however, two important respects in which the graph problem remains less
understood.  First, our constant \(1/4\) is not expected to be sharp; the
trivial Cayley upper bound allows the leading constant \(1\).  Second, unlike
Shitov's theorem for \(0\)-\(1\) determinants, our construction does not produce
a long initial interval of spanning-tree counts.

The interval question is closely related to the dual form of Sedl\'a\v{c}ek's
problem.  If \(\alpha'(t)\) denotes the least number of vertices in a simple
graph with exactly \(t\) spanning trees, then proving that
\[
\{1,2,\dots,\exp(cn\log n)\}\subseteq \mc T_n
\]
would be equivalent to the uniform bound
\[
\alpha'(t)\le (1/c+o(1))\frac{\log t}{\log\log t}
\]
throughout that range.  Sedl\'a\v{c}ek initiated the study of this dual problem, and
it has since been investigated by Nebesk\'y, Azarija--\v{S}krekovski, Stong, and
Chan--Kontorovich--Pak
\cite{sedlacek70minimal,Nebesky1973,AzarijaSkrekovski2013,stong,ckp}.  We do not
address this stronger interval/dual problem here.

\subsection{Overview of the proof}
\label{sec:overview}

The proof is short, and the details are given in the next few sections. Here, we isolate the main ideas and compare them
with previous approaches.

\medskip

A central point of departure, compared to previous works, is that we do not try to construct graphs whose spanning-tree
counts are directly distinguishable.  Instead, for each word
\[
\boldsymbol w=(w_1,\dots,w_m)\in\{2,\dots,q+1\}^m
\]
we construct a simple graph \(G_{\boldsymbol w}\) such that \(\tau(G_{\boldsymbol w})\)
is divisible by
\[
D_{\boldsymbol w}
=
K_m(w_1,\dots,w_m)K_{m-1}(w_1,\dots,w_{m-1}),
\]
where \(K_r\) is a continuant determinant. Using the continued fraction algorithm, the two factors in
\(D_{\boldsymbol w}\), taken as an ordered pair, recover the word
\(\boldsymbol w\).  Thus, by the standard divisor-function bound, the divisors \(D_{\boldsymbol w}\) themselves take many
different values. A second
divisor-function bound then shows that these many divisors cannot be
supported on too few values of \(\tau(G_{\boldsymbol w})\). This kind of argument is familiar in elementary multiplicative number theory.
A basic example is the multiplication-table problem: one asks how many distinct
products \(ab\) arise from a given set of pairs, and a first lower bound is
obtained by controlling how many pairs can produce the same product.  Divisor
bounds provide the basic multiplicity control; see, for instance, Ford's work
on the multiplication-table problem \cite{ford}.  In our setting, the role of
the products \(ab\) is played by the continuant products
\(D_{\boldsymbol w}\).  The reconstruction lemma (\cref{lem:continuant-reconstruction}) shows that, once an ordered
factorization
\[
D_{\boldsymbol w}
=
K_m(w_1,\dots,w_m)K_{m-1}(w_1,\dots,w_{m-1})
\]
is fixed, the word \(\boldsymbol w\) is fixed.  Thus the only remaining
multiplicity comes from the number of possible factorizations of
\(D_{\boldsymbol w}\), which is controlled by the divisor function.  The novelty
is to create such a large supply of distinguishable divisors inside
spanning-tree counts by manufacturing them as factors of
Matrix--Tree cofactors.

\medskip

We now briefly discuss how the divisors are produced.  The guiding object is a
rooted multigraph, presented as a warm-up in
\cref{sec:multigraph-warmup}.  It consists of two disjoint paths, on \(m\) and
\(m-1\) vertices, together with a distinguished root \(\rho\); each path vertex
is joined to \(\rho\) by a prescribed number of parallel edges.  Choosing these
multiplicities appropriately makes the two path blocks in the reduced Laplacian
equal to the tridiagonal matrices defining
\[
K_m(w_1,\dots,w_m)
\qquad\text{and}\qquad
K_{m-1}(w_1,\dots,w_{m-1}).
\]
Thus the warm-up construction realizes the desired product of continuants
directly.

The simple-graph construction in \cref{sec:graph-family} is obtained by
modifying this multigraph model.  The parallel root-edges are replaced by edges
to a common set of anchor vertices.  This removes multiple edges, but it also
couples the path blocks to the anchor block after the root is deleted. To recover a determinant factor, we duplicate each path.  The graph then has an
involution exchanging the two copies of that path.  Passing to the symmetric and
antisymmetric subspaces of such an involution is a standard device in spectral
graph theory, see for instance
\cite{butler-edge-coverings, butler-twins}.  In the present argument, this symmetry has an arithmetic role: inside the
Matrix--Tree cofactor, the antisymmetric coordinates decouple from the anchor
block and contribute exactly the desired path determinant.  The exact
linear-algebraic identity is \cref{lem:two-copy}, and the resulting
divisibility statement is \cref{prop:simple-graph-determinant-factor}.

\medskip

It remains to explain where the superexponential growth enters.  The
constructions of Chan--Kontorovich--Pak and Alon--Buci\'c--Gishboliner are
organized around deletion--contraction data associated with a marked edge, i.e.~
\[
\bigl(\tau(G-e),\tau(G/e)\bigr),
\]
and local graph operations which transform this two-entry vector by elementary
\(2\times2\) integer matrices.  In Chan--Kontorovich--Pak, this matrix calculus
is linked to continued fractions and then to Diophantine input related to
Zaremba's conjecture.  Our continued-fraction structure arises in a different
place.  It comes from the continuant recurrence
\[
K_r=w_rK_{r-1}-K_{r-2},
\qquad\text{equivalently}\qquad
\binom{K_r}{K_{r-1}}
=
\begin{pmatrix}
w_r&-1\\
1&0
\end{pmatrix}
\binom{K_{r-1}}{K_{r-2}}.
\]
Thus the common feature is a two-dimensional matrix recurrence, but the
quantities being propagated are different;  in \cite{abg,ckp} they are
deletion--contraction counts of marked graphs, while here they are consecutive
minors of a tridiagonal block inside a Matrix--Tree cofactor.

This difference is also responsible for the larger scale of the construction. In the deletion--contraction framework of the previous constructions, the graph operations are
local, and increasing the number of choices available at a given step requires a
larger local gadget at that step.  Thus the amount of information encoded grows
linearly with the number of vertices, leading naturally to exponentially many
possibilities. Our construction avoids this local cost by sharing the large part of the
alphabet.  We introduce \(q\) anchor vertices once, and then each of the \(m\)
path positions may choose any one of \(q\) possible weights
\(w_i\in\{2,\dots,q+1\}\).  Thus the same anchor set supports \(q\) choices at
each of \(m\) positions.  Taking $q = \lfloor \eps m \rfloor $, this produces
\[
q^m=\exp((1+o(1))m\log m)
\]
candidate continuant divisors while using only
\[
4m+q-1=(4+\eps)m +O(1)
\]
vertices.  

\medskip

Finally, the result is related in spirit to Shitov's recent theorem on
\(0\)-\(1\) determinants \cite{shitov}.  Shitov's construction first realizes the required range using determinants of auxiliary integer-valued matrices, and then uses determinant-preserving gadgets to convert these
matrices into \(0\)-\(1\) matrices.  In the Matrix--Tree setting, the available
determinants are far more constrained, since they must arise as Laplacian cofactors of
simple graphs, with the symmetry, sign pattern, and sparsity imposed by graph
structure.  We therefore do not try to encode the determinant value itself.
Instead, we only try to construct many determinant factors inside such cofactors, and then
use divisor-counting to force many distinct values of \(\tau\).

\subsection{Acknowledgments}
We thank Swee Hong Chan for his beautiful talk at the 10th Lake Michigan Workshop on Combinatorics which introduced us to the problem. We also thank Marcus Michelen, Huy Tuan Pham, Mehtaab Sawhney, and the participants of Desert Discrete Math Workshop 3 for useful discussions. ChatGPT 5.5 Pro drew our attention to the ``twin trick'' in spectral graph theory and assisted in preparing the manuscript. The author is partially supported by NSF grant DMS-2237646.

\section{Preliminaries}\label{sec:prelims}

We first recall two standard facts that will be used repeatedly.

\begin{theorem}[Kirchhoff's Matrix-Tree Theorem; see~{\cite[Chapter 6]{biggs}}]\label{thm:matrix-tree}
Let $G$ be a finite connected multigraph with Laplacian $L(G)$.  For any
vertex $v$, let $L(G)^{(v)}$ be the matrix obtained from $L(G)$ by deleting the
row and column indexed by $v$.  Then
\[
\tau(G)=\det L(G)^{(v)}.
\]
\end{theorem}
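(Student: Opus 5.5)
The plan is the standard Cauchy--Binet argument via an oriented incidence matrix, written so that it applies verbatim to multigraphs. Loops contribute nothing to \(L(G)\) and lie in no spanning tree, so we may discard them first. Fix an arbitrary orientation of each remaining edge. Let \(B\) be the \(n\times |E|\) incidence matrix: the column of an edge \(e\) directed from \(a\) to \(b\) has \(+1\) in row \(a\), \(-1\) in row \(b\), and \(0\) elsewhere. A direct entrywise check gives \(L(G)=BB^{T}\): a diagonal entry counts the edges at a vertex, and an off-diagonal entry equals minus the number of parallel edges joining the two vertices. Let \(B_v\) be \(B\) with row \(v\) deleted. Then \(L(G)^{(v)}=B_vB_v^{T}\).

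Next I apply the Cauchy--Binet formula to this \((n-1)\times(n-1)\) product:
\[
\det L(G)^{(v)}=\sum_{S}\det\bigl(B_v[S]\bigr)^2 ,
\]
where \(S\) ranges over \((n-1)\)-element subsets of \(E\) and \(B_v[S]\) is the square submatrix of \(B_v\) with columns indexed by \(S\). It therefore suffices to show the following dichotomy: \(\det B_v[S]=\pm1\) when \(S\) is the edge set of a spanning tree, and \(\det B_v[S]=0\) otherwise.

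For the vanishing case, suppose \(S\) has \(n-1\) edges but is not a spanning tree. Then \(S\) contains a cycle; a pair of parallel edges counts as a cycle of length \(2\). Traverse the cycle and take the signed sum of its columns, using sign \(+1\) or \(-1\) according to whether each edge is traversed with or against its orientation. This signed sum is zero already in \(B\), hence also in \(B_v\), so the columns of \(B_v[S]\) are dependent and the determinant is \(0\).

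For the tree case I induct on \(n\), the case \(n=1\) being the empty determinant \(1\). A tree on \(n\ge2\) vertices has at least two leaves, so it has a leaf \(u\neq v\). The row of \(u\) in \(B_v[S]\) has exactly one nonzero entry, equal to \(\pm1\), in the column of the pendant edge at \(u\). Expanding along that row leaves the matrix \(B_v[S\setminus\{e\}]\) for the tree \(T-u\), still with the row of \(v\) deleted. Its determinant is \(\pm1\) by induction, so \(\det B_v[S]=\pm1\). Summing the squares then gives exactly the number of spanning trees.

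I expect no real obstacle. The only care needed is bookkeeping for multigraphs: each edge of \(S\) is a distinct column even when edges are parallel, so trees using different parallel copies are counted separately, as they should be. Note also that the argument never uses connectedness; if \(G\) is disconnected, both sides are \(0\).
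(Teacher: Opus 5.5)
Your proof is correct: it is the standard oriented-incidence-matrix and Cauchy--Binet argument, and your multigraph bookkeeping is exactly what is needed (discard loops, keep parallel edges as distinct columns, treat a pair of parallel edges as a $2$-cycle in the vanishing case). The paper does not prove this statement; it cites it from Biggs, Chapter~6, and the proof there is essentially the one you give.
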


\begin{lemma}[Divisor bound; see {\cite{hardy-wright}}]\label{lem:divisor-bound}
For every $\eta>0$ there exists $X_0(\eta) \in \mb{N}$ such that, whenever $X\ge X_0(\eta)$
and $1\le M\le X$,
\[
\sigma_0(M)\le X^\eta,
\]
where $\sigma_0(M)$ denotes the number of positive divisors of $M$.
\end{lemma}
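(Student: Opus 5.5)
The plan is the classical multiplicative argument. First I will show that for every $\delta>0$ there is a constant $C(\delta)$ with $\sigma_0(M)\le C(\delta)M^{\delta}$ for all $M\ge1$. Then I will absorb the constant by applying this with $\delta=\eta/2$ and taking $X$ large.

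Write $M=\prod_{p}p^{a_p}$, so that $\sigma_0(M)=\prod_p(a_p+1)$. Then
\[
\frac{\sigma_0(M)}{M^{\delta}}=\prod_{p\mid M}\frac{a_p+1}{p^{\delta a_p}},
\]
and I will bound each factor separately, splitting the primes by size.

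\emph{Large primes.} Suppose $p\ge 2^{1/\delta}$. Then $p^{\delta a}\ge 2^{a}\ge a+1$ for every integer $a\ge1$, so the factor is at most $1$.

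\emph{Small primes.} Suppose $p<2^{1/\delta}$; there are only finitely many such primes, depending on $\delta$ alone. For each one, the function $a\mapsto (a+1)/p^{\delta a}$ on $a\ge0$ tends to $0$ as $a\to\infty$, because exponential growth beats linear growth. Hence it is bounded by a finite constant $C_p(\delta)$; a crude bound is $C_p(\delta)\le 1+1/(\delta\log 2)$, via $p^{\delta a}\ge 1+\delta a\log 2$.

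Taking the product over the small primes gives
\[
C(\delta):=\prod_{p<2^{1/\delta}}\max(1,C_p(\delta))<\infty,
\]
and therefore $\sigma_0(M)\le C(\delta)M^\delta$ for all $M\ge1$.

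Now take $\delta=\eta/2$. For $1\le M\le X$ this gives
\[
\sigma_0(M)\le C(\eta/2)\,X^{\eta/2}.
\]
The right-hand side is at most $X^{\eta}$ as soon as $X^{\eta/2}\ge C(\eta/2)$. So it suffices to choose $X_0(\eta)$ to be any integer with $X_0(\eta)\ge C(\eta/2)^{2/\eta}$.

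There is no real obstacle here. The only point needing care is that the small-prime constant depends only on $\eta$ and not on $M$ or $X$. This holds because both the set of small primes and the per-prime maxima are determined by $\delta=\eta/2$ alone.
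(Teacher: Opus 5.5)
Your proof is correct, and it is the standard argument (splitting primes at $2^{1/\delta}$ to get $\sigma_0(M)\le C(\delta)M^{\delta}$) found in Hardy--Wright. The paper gives no proof of its own and simply cites that source. Your extra step, absorbing the constant via $\delta=\eta/2$ and $X_0(\eta)\ge C(\eta/2)^{2/\eta}$, correctly yields the stated form.
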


We now turn to the continuants used in the construction.  For $r \in \mb{Z}_{\geq 0}$ and $x_1,\dots, x_r \in \mb{Z}$, define $K_r(x_1,\dots, x_r)$ recursively by
\[
K_0=1,\qquad K_1(x_1)=x_1,
\]
and, for $i\ge2$,
\begin{equation}
\label{eq:continuant}
K_i(x_1,\dots,x_i)=x_i K_{i-1}(x_1,\dots,x_{i-1})-K_{i-2}(x_1,\dots,x_{i-2}).
\end{equation}
Equivalently,
\[
K_r(x_1,\dots,x_r)=\det T_r(x_1,\dots,x_r),
\]
where
\[
T_r(x_1,\dots,x_r)=
\begin{pmatrix}
x_1&-1&0&\cdots&0\\
-1&x_2&-1&\ddots&\vdots\\
0&-1&x_3&\ddots&0\\
\vdots&\ddots&\ddots&\ddots&-1\\
0&\cdots&0&-1&x_r
\end{pmatrix}.
\]
\begin{remark}
    It is more common to define continuants with a positive sign in the recurrence \eqref{eq:continuant}. This corresponds, for example, to changing one of the two off-diagonals of \(T_r\) from \(-1\) to \(+1\).
\end{remark}

The next lemma, which is classical, records the continued-fraction interpretation of continuants and the resulting reconstruction algorithm. Due to our sign convention, we will work with negative continued fractions. Recall that for $r\geq 1$ and integers $x_1,\dots, x_r$, the (finite) minus continued fraction $[x_r,\dots, x_1]_-$ is defined recursively by
\[
[x_1]_-:=x_1,\qquad
[x_i,\dots,x_1]_-:=x_i-\frac{1}{[x_{i-1},\dots,x_1]_-}
\quad (i\ge2).
\]

\begin{lemma}\label{lem:continuant-reconstruction}
Let $r\ge1$, and let $x_1,\dots,x_r$ be integers with $x_i\ge2$.  
Then, for all $1\leq i \leq r$, $K_i(x_1,\dots, x_i) \in \mb{Z}_{> 0}$ and moreover,
\[
\frac{K_i(x_1,\dots,x_i)}{K_{i-1}(x_1,\dots, x_{i-1})}=[x_i,\dots,x_1]_-.
\]
In particular, the map \[ (x_1,\dots,x_r) \mapsto \paren{K_r(x_1,\dots,x_r),K_{r-1}(x_1,\dots,x_{r-1})} \] from $\set{(x_1,\dots,x_r)\in\Z^r:x_i\ge 2}$ to $\Z_{>0}^2$ is injective.
\end{lemma}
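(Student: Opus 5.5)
We argue by induction on $i$, proving a slightly stronger statement: for every $1\le i\le r$,
\[
K_i(x_1,\dots,x_i) > K_{i-1}(x_1,\dots,x_{i-1}) \ge 1 .
\]
Write $K_i$ for $K_i(x_1,\dots,x_i)$. The base case is immediate: $K_0=1$ and $K_1=x_1\ge2>1$. For the inductive step, suppose $K_{i-1}>K_{i-2}\ge1$ with $i\ge2$. The recurrence \eqref{eq:continuant} and $x_i\ge2$ give
\[
K_i = x_iK_{i-1}-K_{i-2} \ge 2K_{i-1}-K_{i-2} = K_{i-1} + (K_{i-1}-K_{i-2}) > K_{i-1}.
\]
So $K_i$ is a positive integer and the sequence $(K_i)$ is strictly increasing. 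In particular every $K_{i-1}$ is nonzero, so the ratios below are well defined.

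For the continued-fraction identity we again induct on $i$. When $i=1$ we have $K_1/K_0=x_1=[x_1]_-$. For $i\ge2$, dividing the recurrence by $K_{i-1}$ gives
\[
\frac{K_i}{K_{i-1}} = x_i - \frac{K_{i-2}}{K_{i-1}} = x_i - \frac{1}{K_{i-1}/K_{i-2}} = x_i - \frac{1}{[x_{i-1},\dots,x_1]_-} = [x_i,\dots,x_1]_-,
\]
using the inductive hypothesis in the third step. This is exactly the defining recursion for $[x_i,\dots,x_1]_-$.

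For injectivity, we recover $(x_1,\dots,x_r)$ from the pair $(A,B)=(K_r,K_{r-1})$ by a Euclidean-type algorithm. The identity $K_{r-2}=x_rK_{r-1}-K_r$ together with $0\le K_{r-2}<K_{r-1}$ determines everything from the pair. When $r\ge2$, the first part gives $1\le K_{r-2}<K_{r-1}$, so $x_r=\lceil K_r/K_{r-1}\rceil$ is forced: it is the unique integer $x$ with $0<xK_{r-1}-K_r<K_{r-1}$. Then $K_{r-2}=x_rK_{r-1}-K_r$ is also determined by the pair. When $r=1$ we simply have $x_1=K_1$.

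We then proceed by induction on $r$. Suppose two words $\boldsymbol x,\boldsymbol x'$ of length $r$ with all entries at least $2$ give the same pair $(K_r,K_{r-1})$. Then $x_r=x'_r$, and the pairs $(K_{r-1},K_{r-2})$ for the truncated words $(x_1,\dots,x_{r-1})$ and $(x'_1,\dots,x'_{r-1})$ agree. By the inductive hypothesis the truncated words are equal, so $\boldsymbol x=\boldsymbol x'$.

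The only delicate point is ensuring $K_{r-2}\ge1$, so that $x_r$ is read off as a ceiling without ambiguity. This is exactly where the hypothesis $x_i\ge2$ enters, via the strict monotonicity established in the first part.
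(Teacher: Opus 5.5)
Your proof is correct and follows essentially the same route as the paper: strict monotonicity $K_{i-2}<K_{i-1}$ by induction, the ratio identity by dividing the recurrence by $K_{i-1}$, and recovery of $x_r=\lceil K_r/K_{r-1}\rceil$ followed by $K_{r-2}=x_rK_{r-1}-K_r$. You package the last part as an induction on $r$, where the paper runs an explicit backward algorithm, and one small quibble: the ceiling step only needs $0\le K_{r-2}<K_{r-1}$ (the paper even uses $K_{-1}=0$), so the essential use of $x_i\ge 2$ is the upper bound $K_{r-2}<K_{r-1}$ rather than $K_{r-2}\ge 1$.
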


\begin{proof}
For convenience of notation, let 
\[
A_i=K_i(x_1,\dots,x_i),\qquad A_0=1,\qquad A_{-1}=0.
\]
We first show that
\begin{equation}\label{eq:continuant-monotonicity}
0\le A_{i-2}<A_{i-1}\qquad (i\ge1).
\end{equation}
For $i=1$, this is $0=A_{-1}<A_0=1$.  If
$0\le A_{i-2}<A_{i-1}$, then the continuant recurrence \eqref{eq:continuant} gives
\[
A_i=x_iA_{i-1}-A_{i-2}
    \ge 2A_{i-1}-A_{i-2}
    > A_{i-1},
\]
since $x_i\ge2$.  Thus \eqref{eq:continuant-monotonicity} follows by
induction.

The same recurrence gives, for every $i\ge1$,
\[
\frac{A_i}{A_{i-1}}
=
x_i-\frac{A_{i-2}}{A_{i-1}}.
\]
For $i=1$, this says $A_1/A_0=x_1=[x_1]_-$.  For $i\ge2$, we have
\[
\frac{A_i}{A_{i-1}}
=
x_i-\frac{1}{A_{i-1}/A_{i-2}}.
\]
By induction on $i$, this is exactly
\[
\frac{A_i}{A_{i-1}}=[x_i,\dots,x_1]_-.
\]

It remains only to spell out why this continued-fraction expansion is uniquely
recoverable from the final numerator-denominator pair.  From
\eqref{eq:continuant-monotonicity},
\[
0\le \frac{A_{i-2}}{A_{i-1}}<1,
\]
and hence
\[
x_i-1<\frac{A_i}{A_{i-1}}\le x_i.
\]
Therefore
\[
x_i=\left\lceil \frac{A_i}{A_{i-1}}\right\rceil.
\]
Once $x_i$ is known, $A_{i-2}$ is recovered by rearranging the
continuant recurrence:
\[
A_{i-2}=x_iA_{i-1}-A_i.
\]
Hence, starting from \((A_r,A_{r-1})\), the formulas
\[
x_i=\left\lceil\frac{A_i}{A_{i-1}}\right\rceil,
\qquad
A_{i-2}=x_iA_{i-1}-A_i
\]
recover \(x_i\) and then \(A_{i-2}\), for \(i=r,r-1,\dots,1\);
at the final step this gives \(A_{-1}=0\), as expected. 
\end{proof}

The following crude estimates on the continuants will suffice for our purpose.

\begin{lemma}\label{lem:continuant-product-bound}
Let $r\ge 1$ and $x_1,\dots,x_r$ be integers with $x_i\ge2$. Then,
\[
1\le K_r(x_1,\dots,x_r)\le\prod_{i=1}^r x_i.
\]
\end{lemma}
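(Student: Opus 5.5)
The lower bound is already contained in \cref{lem:continuant-reconstruction}: with $A_i=K_i(x_1,\dots,x_i)$, $A_0=1$, $A_{-1}=0$, the chain \eqref{eq:continuant-monotonicity} gives $1=A_0<A_1<\dots<A_r$. Since the $A_i$ are integers, $K_r\ge 1$.

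For the upper bound, I will induct on $i$ and prove $A_i\le\prod_{j\le i}x_j$ for all $0\le i\le r$, with the empty product equal to $1$. The base cases are $A_0=1$ and $A_1=x_1$. For the inductive step, the recurrence \eqref{eq:continuant} gives
\[
A_i=x_iA_{i-1}-A_{i-2}\le x_iA_{i-1},
\]
because \eqref{eq:continuant-monotonicity} gives $A_{i-2}\ge 0$. Applying the induction hypothesis to $A_{i-1}$ then yields $A_i\le x_i\prod_{j<i}x_j=\prod_{j\le i}x_j$. Taking $i=r$ proves the claim.

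There is no real obstacle. The only point that needs care is the nonnegativity of $A_{i-2}$, which is needed to discard the subtracted term in the recurrence, and this was already established in the proof of \cref{lem:continuant-reconstruction}.
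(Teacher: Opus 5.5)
Your proposal is correct and follows essentially the same route as the paper: the lower bound is taken from the monotonicity chain in \cref{lem:continuant-reconstruction}, and the upper bound comes from dropping the nonnegative term $K_{i-2}$ in the recurrence, giving $K_i\le x_iK_{i-1}$, and iterating from $K_0=1$. Your version just states the induction explicitly.
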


\begin{proof}
The lower bound was proved in \cref{lem:continuant-reconstruction}. For the upper bound, using the continuant recurrence \eqref{eq:continuant} and non-negativity of continuants, we have
\[
K_i(x_1,\dots, x_i) =x_iK_{i-1}(x_1,\dots, x_{i-1})-K_{i-2}(x_1,\dots, x_{i-2}) \le x_iK_{i-1}.
\]
Iterating this, and recalling that $K_0 = 1$, gives $K_r\le\prod_{i=1}^r x_i$.
\end{proof}

The next identity is the linear-algebraic form of the twin-copy idea: from two identical copies, one copy factors out while the other remains coupled to the exterior.

\begin{lemma}
\label{lem:two-copy}
Let \(s,t\ge 1\).  Let
\[
A\in \mb{Z}^{s\times s},\qquad
E\in \mb{Z}^{s \times t},\qquad
G\in \mb{Z}^{t \times s},\qquad
H\in \mb{Z}^{t \times t}.
\]
Then
\[
\det
\begin{pmatrix}
A&0&E\\
0&A&E\\
G&G&H
\end{pmatrix}
=
\det(A)\,
\det
\begin{pmatrix}
A&E\\
2G&H
\end{pmatrix}.
\]
\end{lemma}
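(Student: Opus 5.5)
The plan is to change basis by a unimodular block matrix that separates the two copies into symmetric and antisymmetric parts. Denote the big matrix by
\[
M=\begin{pmatrix}A&0&E\\0&A&E\\G&G&H\end{pmatrix}.
\]
Define
\[
P=\begin{pmatrix}I_s&0&0\\-I_s&I_s&0\\0&0&I_t\end{pmatrix},\qquad
Q=\begin{pmatrix}I_s&0&0\\I_s&I_s&0\\0&0&I_t\end{pmatrix}.
\]
Both are block lower triangular with identity diagonal blocks, so $\det P=\det Q=1$. It therefore suffices to compute $\det(PMQ)$.

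First compute $PM$. Subtracting the first block row from the second gives
\[
PM=\begin{pmatrix}A&0&E\\-A&A&0\\G&G&H\end{pmatrix}.
\]
Next, right multiplication by $Q$ adds the second block column to the first, giving
\[
PMQ=\begin{pmatrix}A&0&E\\0&A&0\\2G&G&H\end{pmatrix}.
\]
In this matrix the second block row is $(0,A,0)$. Permuting the middle block row and column to the front is a simultaneous (conjugating) permutation, so it leaves the determinant unchanged. It yields
\[
\begin{pmatrix}A&0&0\\0&A&E\\G&2G&H\end{pmatrix}.
\]
This matrix is block lower triangular with respect to the splitting $s+(s+t)$. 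Its determinant is therefore $\det(A)\det\begin{pmatrix}A&E\\2G&H\end{pmatrix}$, which is the claim. Alternatively, one can avoid the permutation and expand the Laplace formula along the middle block row directly.

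There is no real obstacle; the only care needed is bookkeeping. One must check that the zero blocks land in the right places after the row and column operations, which is the $E$ term in the second row vanishing and the $0$ above the $A$. One must also check that the permutation used is the same on rows and columns, so that no sign appears. The identity is a polynomial identity in the entries, so working over $\mb{Z}$ poses no issue.
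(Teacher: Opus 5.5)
Your proof is correct and is essentially the paper's argument. The paper subtracts the second block row from the first and then adds the first block column to the second, which lands directly on $\begin{pmatrix}A&0&0\\0&A&E\\G&2G&H\end{pmatrix}$. You perform the mirror-image row and column operations, so you need an extra simultaneous block permutation to reach that same matrix; as you note, this permutation introduces no sign.
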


\begin{proof}
Subtract the second block row from the first block row.  This gives
\[
\det\begin{pmatrix}
A&0&E\\
0&A&E\\
G&G&H
\end{pmatrix}
=
\det\begin{pmatrix}
A&-A&0\\
0&A&E\\
G&G&H
\end{pmatrix}.
\]
Then add the first block column to the second block column.  This gives
\[
\det\begin{pmatrix}
A&-A&0\\
0&A&E\\
G&G&H
\end{pmatrix}
=
\det\begin{pmatrix}
A&0&0\\
0&A&E\\
G&2G&H
\end{pmatrix}.
\]
Expanding along the first block row gives the claimed identity.  
\end{proof}

\section{Warm-up: a multigraph construction}\label{sec:multigraph-warmup}

As a warm-up, we begin by describing a multigraph version of our construction, which already contains most of the key ideas. In the next section, two modifications will turn this idea into a simple-graph construction. Our multigraphs will be parametrized by
\[
\boldsymbol{w}=(w_1,\dots,w_m)\in\set{2,3,\dots,q+1}^m,
\]
where $m\geq 3$ is half the number of vertices and $q\geq 1$ is a free parameter. In our application, we will take $q = \lfloor \eps m \rfloor $ for sufficiently small $\eps > 0$. 

\begin{figure}[t]
\centering
\begin{tikzpicture}[scale=.92,every node/.style={font=\small}]
    \node[circle,draw,inner sep=1.6pt] (rho) at (0,0) {$\rho$};
    \node[font=\scriptsize,align=center] at (0,-.75){};

    \node[circle,draw,inner sep=1.2pt,label=above:{$u_1$}] (u1) at (2.6,1.45) {};
    \node at (3.35,1.45) {$\cdots$};
    \node[circle,draw,inner sep=1.2pt,label=above:{$u_i$}] (ui) at (4.25,1.45) {};
    \node at (5.1,1.45) {$\cdots$};
    \node[circle,draw,inner sep=1.2pt,label=above:{$u_m$}] (um) at (6.05,1.45) {};
    \draw (u1)--(3.05,1.45);
    \draw (3.65,1.45)--(ui);
    \draw (ui)--(4.75,1.45);
    \draw (5.45,1.45)--(um);

    \node[circle,draw,inner sep=1.2pt,label=below:{$u'_1$}] (up1) at (2.6,-1.45) {};
    \node at (3.35,-1.45) {$\cdots$};
    \node[circle,draw,inner sep=1.2pt,label=below:{$u'_i$}] (upi) at (4.25,-1.45) {};
    \node at (5.1,-1.45) {$\cdots$};
    \node[circle,draw,inner sep=1.2pt,label=below:{$u'_{m-1}$}] (upm) at (6.05,-1.45) {};
    \draw (up1)--(3.05,-1.45);
    \draw (3.65,-1.45)--(upi);
    \draw (upi)--(4.75,-1.45);
    \draw (5.45,-1.45)--(upm);

    \draw[bend left=18] (rho) to (ui);
    \draw[bend left=9] (rho) to (ui);
    \draw[bend right=2] (rho) to (ui);
    \node[font=\scriptsize,align=center] at (-0.3,1) {$w_i-\delta_i^{(m)}$\\parallel edges};

    \draw[bend right=18] (rho) to (upi);
    \draw[bend right=9] (rho) to (upi);
    \draw[bend left=2] (rho) to (upi);
    \node[font=\scriptsize,align=center] at (-0.3,-1) {$w_i-\delta_i^{(m-1)}$\\parallel edges};

    \node[font=\scriptsize,align=center] at (5.35,0) {};
\end{tikzpicture}
\caption{A schematic of the multigraph construction. All multi-edges involve the distinguished root $\rho$.}
\label{fig:multigraph-warmup}
\end{figure}

\medskip

Throughout, let \(P_r\) denote the path
\[
1-2-\cdots-r
\]
on vertex set \([r]\).  For \(i\in[r]\), write
\[
\delta_i^{(r)}:=\deg_{P_r}(i)
=\ind_{i>1}+\ind_{i<r}.
\]
We now construct \(H_{\boldsymbol w}\); see \cref{fig:multigraph-warmup}.
Let \(\rho\) be a distinguished root vertex, and take two paths, disjoint from each other
and from \(\rho\),
\[
u_1-\cdots-u_m,
    \qquad
u'_1-\cdots-u'_{m-1}.
\]
For \(1\le i\le m\), add \(w_i-\delta_i^{(m)}\) parallel edges between
\(u_i\) and \(\rho\).  For \(1\le i\le m-1\), add
\(w_i-\delta_i^{(m-1)}\) parallel edges between \(u'_i\) and \(\rho\).
These multiplicities are nonnegative since \(w_i\ge2\) and
\(\delta_i^{(r)}\le2\).  Moreover, each endpoint receives at least one edge
to \(\rho\), so both paths are attached to the root and the resulting
multigraph is connected.

\begin{proposition}\label{prop:multigraph-warmup}
Let $\boldsymbol w  = (w_1,\dots, w_m) \in \{2,3,\dots, q+1\}^m$. For the multigraph \(H_{\boldsymbol w}\) constructed above,
\[
\tau(H_{\boldsymbol w})
    =K_m(w_1,\dots,w_m)K_{m-1}(w_1,\dots,w_{m-1}).
\]
\end{proposition}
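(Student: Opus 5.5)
We apply Kirchhoff's Matrix-Tree Theorem (\cref{thm:matrix-tree}) with the deleted vertex chosen to be the root \(\rho\). The reduced Laplacian \(L(H_{\boldsymbol w})^{(\rho)}\) is indexed by \(u_1,\dots,u_m,u'_1,\dots,u'_{m-1}\). Since every edge meeting \(\rho\) now contributes only to a diagonal entry, we need only track the diagonal entries and the entries coming from path edges. No edge of \(H_{\boldsymbol w}\) joins a vertex \(u_i\) to a vertex \(u'_j\), so the reduced Laplacian is block diagonal. One block corresponds to the path \(u_1-\cdots-u_m\) and the other to the path \(u'_1-\cdots-u'_{m-1}\). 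Hence \(\tau(H_{\boldsymbol w})\) is the product of the determinants of the two blocks.

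Next we identify each block. Consider the first path. The degree of \(u_i\) in \(H_{\boldsymbol w}\) equals its path-degree \(\delta_i^{(m)}\) plus the number \(w_i-\delta_i^{(m)}\) of parallel edges to \(\rho\), which gives \(\deg(u_i)=w_i\). The off-diagonal entries are \(-1\) exactly between consecutive path vertices and \(0\) elsewhere. So this block is precisely \(T_m(w_1,\dots,w_m)\). The same computation on the second path, using \(\delta_i^{(m-1)}\), shows that its block is \(T_{m-1}(w_1,\dots,w_{m-1})\).

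Using \(K_r=\det T_r\), these two determinants are \(K_m(w_1,\dots,w_m)\) and \(K_{m-1}(w_1,\dots,w_{m-1})\), and multiplying them gives the claimed formula.

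The Matrix-Tree Theorem requires \(H_{\boldsymbol w}\) to be connected, and this was already noted in the construction. The only point needing care is the endpoint bookkeeping: at \(i=1\) and \(i=r\) the path-degree is \(1\), so the multiplicity of root edges must be \(w_i-1\) rather than \(w_i-2\) for the diagonal entry to equal exactly \(w_i\). The definition of \(\delta_i^{(r)}\) handles this uniformly. No step here is a real obstacle; the argument is a direct computation.
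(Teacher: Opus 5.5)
Your proposal is correct and is essentially the paper's proof: you delete the root $\rho$, note that the reduced Laplacian is block diagonal with blocks $T_m(w_1,\dots,w_m)$ and $T_{m-1}(w_1,\dots,w_{m-1})$ (each path vertex has total degree $w_i$), and multiply the two continuant determinants. The only difference is presentational: the paper first writes the full Laplacian in block form and then deletes the root row and column.
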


\begin{proof}
Let
\[
h_i=w_i-\delta_i^{(m)}\qquad (1\le i\le m),
\]
and
\[
h'_i=w_i-\delta_i^{(m-1)}\qquad (1\le i\le m-1),
\]
so that \(h_i\) is the number of edges between \(u_i\) and
\(\rho\) and similarly for \(h'_i\). Order the vertices as
\[
\rho,\ u_1,\dots,u_m,\ u'_1,\dots,u'_{m-1}.
\]
Let
\[
s=\sum_{i=1}^m h_i+\sum_{i=1}^{m-1}h'_i,
\qquad
h=(h_1,\dots,h_m)^T,
\qquad
h'=(h'_1,\dots,h'_{m-1})^T.
\]
Then the Laplacian of \(H_{\boldsymbol w}\) has block form
\[
L(H_{\boldsymbol w})=
\begin{pmatrix}
s & -h^T & -(h')^T\\
-h & T_m(w_1,\dots,w_m) & 0\\
-h' & 0 & T_{m-1}(w_1,\dots,w_{m-1})
\end{pmatrix}.
\]
Applying Kirchhoff's Matrix-Tree Theorem with the cofactor obtained by deleting the row and column indexed
by \(\rho\), we have 
\begin{align*}
\tau(H_{\boldsymbol w}) &= \det \begin{pmatrix}
 T_m(w_1,\dots,w_m) & 0\\
0 & T_{m-1}(w_1,\dots,w_{m-1})
\end{pmatrix}\\
&= \det T_m(w_1,\dots,w_m)
\det T_{m-1}(w_1,\dots,w_{m-1})\\
&= K_m(w_1,\dots,w_m)K_{m-1}(w_1,\dots,w_{m-1}),
\end{align*}
as claimed. 
\end{proof}

\section{The simple graph family}\label{sec:graph-family}

We now turn the multigraph construction into a simple graph construction.  There
are two changes.  First, the parallel edges from a path vertex to the root
\(\rho\) are replaced by edges to distinct ``anchor'' vertices \(a_1,\dots,a_q\),
each of which is adjacent to \(\rho\).  Second, for each of the two paths, we create a ``twin copy''. The reason for this is the following. In the
multigraph construction, deleting the root $\rho$ left the path blocks completely decoupled. However, in our simple graph construction, the path blocks are still coupled after deleting $\rho$ due to the anchor vertices, so that the determinant of the path block need not divide the determinant of the cofactor. However, as we saw in \cref{lem:two-copy}, two identical path blocks with identical anchor connections have a special block form, and this block form lets us factor off
one copy of the path determinant exactly.  

\medskip

We now proceed to the formal details. As before, fix integers \(m\ge3\) and \(q\ge1\), and let
\[
\boldsymbol{w}=(w_1,\dots,w_m)\in\set{2,3,\dots,q+1}^m.
\]
We will construct a simple connected graph 
\[
G_{\boldsymbol w}=G_{m,q,\boldsymbol w}
\]
on \[N = N_{m,q} = 4m + q-1\] 
vertices; see \cref{fig:simple-graph}. Begin with an anchor star: a root vertex \(\rho\), vertices
\(a_1,\dots,a_q\), and edges \(\rho a_t\) for \(1\le t\le q\).  Next take
four paths, all disjoint from the anchor star:
\[
u_1-\cdots-u_m,
    \qquad
v_1-\cdots-v_m,
\]
and
\[
u'_1-\cdots-u'_{m-1},
    \qquad
v'_1-\cdots-v'_{m-1}.
\]
For \(1\le i\le m\), set
\[
h_i=w_i-\delta_i^{(m)}.
\]
Join both \(u_i\) and \(v_i\) to the anchor vertices
\[
a_1,\dots,a_{h_i},
\]
with the convention that this segment is empty if \(h_i=0\).  Similarly, for
\(1\le i\le m-1\), set
\[
h'_i=w_i-\delta_i^{(m-1)},
\]
and join both \(u'_i\) and \(v'_i\) to
\[
a_1,\dots,a_{h'_i},
\]
again with the same convention if $h_i' = 0$. There
are no other edges. 

By construction, the graph is simple. Moreover, for each path endpoint \(\delta_i^{(r)}=1\), so
\(h_i\ge1\) and \(h'_i\ge1\). Thus every path is attached to the anchor
star, and the graph is connected.

\begin{figure}[t]
\centering
\begin{tikzpicture}[scale=.9,every node/.style={font=\small}]
    \node[circle,draw,inner sep=1.5pt] (rho) at (0,0) {$\rho$};
    \node[circle,draw,inner sep=1.2pt] (a1) at (1.65,2.15) {$a_1$};
    \node[circle,draw,inner sep=1.2pt] (a2) at (1.65,1.35) {$a_2$};
    \node at (1.65,.75) {$\vdots$};
    \node[circle,draw,inner sep=1.2pt] (ahi) at (1.65,.05) {$a_{h_i}$};
    \node at (1.65,-.6) {$\vdots$};
    \node[circle,draw,inner sep=1.2pt] (aq) at (1.65,-1.35) {$a_q$};

    \draw (rho)--(a1);
    \draw (rho)--(a2);
    \draw (rho)--(ahi);
    \draw (rho)--(aq);
    \node[font=\scriptsize,align=center] at (.35,-2.0) {};

    \node[left] at (3.6,2.35) {$u_1$};
    \node[circle,draw,inner sep=1.1pt] (u1) at (3.9,2.35) {};
    \node at (4.45,2.35) {$\cdots$};
    \node[circle,fill,inner sep=1.6pt,label=above:{\scriptsize \(u_i\)}] (ui) at (5.15,2.35) {};
    \node at (5.9,2.35) {$\cdots$};
    \node[circle,draw,inner sep=1.1pt] (um) at (7.0,2.35) {};
    \node[right] at (7.25,2.35) {$u_m$};

    \draw (u1)--(4.2,2.35);
    \draw (4.7,2.35)--(ui);
    \draw (ui)--(5.55,2.35);
    \draw (6.25,2.35)--(um);

    \foreach \name/\y/\leftlab/\midlab/\rightlab in {
        v/.95/$v_1$/$v_i$/$v_m$,
        up/-.45/$u'_1$/$u'_i$/$u'_{m-1}$,
        vp/-1.85/$v'_1$/$v'_i$/$v'_{m-1}$
    }{
        \node[left] at (3.6,\y) {\leftlab};
        \node[circle,draw,inner sep=1.1pt] (\name 1) at (3.9,\y) {};
        \node at (4.45,\y) {$\cdots$};
        \node[circle,draw,inner sep=1.1pt,label=right:{\scriptsize \midlab}] (\name 2) at (5.15,\y) {};
        \node at (5.9,\y) {$\cdots$};
        \node[circle,draw,inner sep=1.1pt] (\name m) at (7.0,\y) {};
        \node[right] at (7.25,\y) {\rightlab};

        \draw (\name 1)--(4.2,\y);
        \draw (4.7,\y)--(\name 2);
        \draw (\name 2)--(5.55,\y);
        \draw (6.25,\y)--(\name m);
    }

\draw[densely dashed] (ui) to[bend left=12] (a1);
\draw[densely dashed] (ui) to[bend left=2] (a2);
\draw[densely dashed] (ui) to[bend right=10] (ahi);
    \node[font=\scriptsize,align=center] at (4.05,1.45) {};
\end{tikzpicture}
\caption{A schematic of the simple graph \(G_{\boldsymbol w}\).  The ``anchor'' star allows us to replace
parallel root-edges from the multigraph construction: a path vertex joined to
\(a_1,\dots,a_h\) serves as a replacement for \(h\) parallel edges to the root. Twin copies of the paths allow determinant factors to survive even in the presence of anchor vertices.}
\label{fig:simple-graph}
\end{figure}

\medskip

The simple graph constructed above has
\[
N_{m,q}=4m+q-1
\]
vertices.  In the final argument \(q\) will be tied to \(m\), namely
\(q=\lfloor \eps m\rfloor\), so these graphs do not realize every
integer value of \(n\).  However, this can be fixed by a simple padding argument.  If \(n\ge N_{m,q}\), let \(G_{\boldsymbol w}^{(n)}\) be obtained from
\(G_{\boldsymbol w}\) by adding
\[
\ell=n-N_{m,q}
\]
new vertices \(z_1,\dots,z_\ell\), each adjacent only to the fixed anchor vertex 
\(a_1\).  This padding does not change the number of spanning
trees.  Indeed, each edge \(a_1z_j\) is a bridge and hence belongs to
every spanning tree; deleting all leaves $z_1,\dots, z_\ell$ and their forced edges gives
a spanning tree of \(G_{\boldsymbol w}\), and conversely every spanning tree
of \(G_{\boldsymbol w}\) extends uniquely by adding these edges.
Therefore
\begin{equation}\label{eq:padding-preserves-tau}
\tau(G_{\boldsymbol w}^{(n)})=\tau(G_{\boldsymbol w}).
\end{equation}

\subsection{The determinant factor}\label{sec:determinant}

In the multigraph construction, deleting the root \(\rho\) left two decoupled
path blocks, and hence gave the exact identity (see \cref{prop:multigraph-warmup})
\[
\tau(H_{\boldsymbol w})
=
K_m(w_1,\dots,w_m)K_{m-1}(w_1,\dots,w_{m-1}).
\]
For the simple graph \(G_{\boldsymbol w}\), the corresponding equality is no
longer true: after deleting \(\rho\), the anchor vertices \(a_1,\dots,a_q\)
remain, so the path blocks are still coupled to the rest of the cofactor.
The point of the twin copies is that the same continuant product nevertheless
survives as an exact divisor of the number of spanning trees. 

\begin{proposition}
\label{prop:simple-graph-determinant-factor}
For every \(
\boldsymbol w=(w_1,\dots,w_m)\in\set{2,3,\dots,q+1}^m
\),
\[
D_{\boldsymbol w}:=
K_m(w_1,\dots,w_m)K_{m-1}(w_1,\dots,w_{m-1}) \text{ divides } \tau(G_{\boldsymbol w}).
\]
\end{proposition}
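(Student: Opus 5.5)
We apply the Matrix--Tree Theorem (\cref{thm:matrix-tree}) with the cofactor obtained by deleting the row and column of \(\rho\), and then use \cref{lem:two-copy} twice, once for each pair of twin paths.

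First we write the reduced Laplacian in block form. Order the remaining vertices as
\[
u_1,\dots,u_m,\ v_1,\dots,v_m,\ u'_1,\dots,u'_{m-1},\ v'_1,\dots,v'_{m-1},\ a_1,\dots,a_q .
\]
The vertex \(u_i\) has degree \(\delta_i^{(m)}+h_i=w_i\), so the \(u\)-block of \(L(G_{\boldsymbol w})^{(\rho)}\) is exactly \(T_m(w_1,\dots,w_m)\). The same holds for the \(v\)-block. Likewise the \(u'\)- and \(v'\)-blocks are both \(T_{m-1}(w_1,\dots,w_{m-1})\). Different paths are never adjacent, so the off-diagonal blocks between distinct paths vanish.

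Let \(E\in\Z^{m\times q}\) have \((i,t)\) entry \(-\ind_{t\le h_i}\), and let \(E'\in\Z^{(m-1)\times q}\) have \((i,t)\) entry \(-\ind_{t\le h'_i}\). The rows of \(u_i\) and \(v_i\) agree in the anchor columns, both equal to \(E\). The anchor rows give the transposes \(E^T\) in the \(u\)- and \(v\)-columns and \((E')^T\) in the \(u'\)- and \(v'\)-columns. Call the anchor block \(Q\); it is diagonal, with entries \(\deg a_t\). The cofactor therefore has the form
\[
\begin{pmatrix}
A&0&0&0&E\\
0&A&0&0&E\\
0&0&B&0&E'\\
0&0&0&B&E'\\
E^T&E^T&(E')^T&(E')^T&Q
\end{pmatrix},
\qquad A=T_m(w_1,\dots,w_m),\ B=T_{m-1}(w_1,\dots,w_{m-1}).
\]

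Next we apply \cref{lem:two-copy} with this \(A\). The exterior consists of the last three block rows and columns, namely the two \(B\)-blocks together with the anchors. The block ``\(E\)'' of the lemma is then \((0\;0\;E)\), and ``\(G\)'' is \((0\;0\;E^T)^T\). This factors out \(\det A=K_m(w_1,\dots,w_m)\). The remaining determinant is
\[
\det\begin{pmatrix}A&0&0&E\\0&B&0&E'\\0&0&B&E'\\2E^T&(E')^T&(E')^T&Q\end{pmatrix}.
\]
A simultaneous permutation of block rows and columns does not change the determinant. Using one to move the two \(B\)-blocks to the front, we obtain the form of \cref{lem:two-copy} again, now with this \(B\) and an exterior made of the \(A\)-block and the anchors. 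Applying the lemma factors out \(\det B=K_{m-1}(w_1,\dots,w_{m-1})\). All entries are integers, so the leftover factor is an integer determinant. Hence
\[
\tau(G_{\boldsymbol w})=K_mK_{m-1}\cdot(\text{integer}),
\]
which gives the claimed divisibility.

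The main point needing care is checking that the hypotheses of \cref{lem:two-copy} hold exactly. The two twin copies must have identical diagonal blocks, which is where the degree bookkeeping \(\deg u_i=w_i\) comes in. They must also have identical couplings to the exterior, and the twins must not be coupled to each other. The second application needs the permutation step so that the twin pair again sits in the lemma's block shape. Finally, we note that the lemma allows arbitrary integer blocks \(G\) and \(H\), so the asymmetry introduced by the factor \(2E^T\) causes no problem.
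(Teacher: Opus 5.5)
Your proposal is correct and follows the paper's proof essentially step for step. You delete $\rho$, write the cofactor in the twin block form, and apply \cref{lem:two-copy} to the $A$-pair; you then permute the $B$-pair to the front and apply the lemma again, leaving $\det A\det B$ times an integer determinant. Your explicit identification of the coupling blocks (entries $-\ind_{t\le h_i}$) and of the diagonal anchor block is slightly more detailed than the paper, which only needs these blocks to be integer matrices.
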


\begin{proof}
Delete from the Laplacian of \(G_{\boldsymbol w}\) the row and column indexed
by the root \(\rho\), and denote the resulting Matrix-Tree cofactor by
\(M_{\boldsymbol w}\).  Set
\[
A_{\boldsymbol w}=T_m(w_1,\dots,w_m),
    \qquad
B_{\boldsymbol w}=T_{m-1}(w_1,\dots,w_{m-1}).
\]
The two \(m\)-vertex paths contribute identical copies of
\(A_{\boldsymbol w}\), and the two \((m-1)\)-vertex paths contribute identical
copies of \(B_{\boldsymbol w}\).  The remaining path-to-anchor and
anchor block entries are encoded by integer matrices
\[
P_{\boldsymbol w}\in\Z^{m\times q},\qquad
Q_{\boldsymbol w}\in\Z^{(m-1)\times q},\qquad
F_{\boldsymbol w}\in\Z^{q\times q}.
\]
With the vertex order
\[
u_1,\dots,u_m,\ 
v_1,\dots,v_m,\ 
u'_1,\dots,u'_{m-1},\ 
v'_1,\dots,v'_{m-1},\ 
a_1,\dots,a_q,
\]
the cofactor has block form
\begin{equation}\label{eq:reduced-laplacian-block}
M_{\boldsymbol w}=
\begin{pmatrix}
A_{\boldsymbol w}&0&0&0&P_{\boldsymbol w}\\
0&A_{\boldsymbol w}&0&0&P_{\boldsymbol w}\\
0&0&B_{\boldsymbol w}&0&Q_{\boldsymbol w}\\
0&0&0&B_{\boldsymbol w}&Q_{\boldsymbol w}\\
P_{\boldsymbol w}^T&P_{\boldsymbol w}^T&
Q_{\boldsymbol w}^T&Q_{\boldsymbol w}^T&F_{\boldsymbol w}
\end{pmatrix}.
\end{equation}
By \cref{thm:matrix-tree},
\[
\tau(G_{\boldsymbol w})=\det M_{\boldsymbol w}.
\]
Applying \cref{lem:two-copy} to the first two copies of \(A_{\boldsymbol w}\)
in \eqref{eq:reduced-laplacian-block} gives
\begin{equation}\label{eq:after-first-peel}
\det M_{\boldsymbol w}
=
\det(A_{\boldsymbol w})
\det
\begin{pmatrix}
A_{\boldsymbol w}&0&0&P_{\boldsymbol w}\\
0&B_{\boldsymbol w}&0&Q_{\boldsymbol w}\\
0&0&B_{\boldsymbol w}&Q_{\boldsymbol w}\\
2P_{\boldsymbol w}^T&Q_{\boldsymbol w}^T&
Q_{\boldsymbol w}^T&F_{\boldsymbol w}
\end{pmatrix}.
\end{equation}
In the remaining determinant, simultaneously permuting block rows and block
columns to put the two \(B_{\boldsymbol w}\) blocks first does not change the determinant, so that
\[
\det
\begin{pmatrix}
A_{\boldsymbol w}&0&0&P_{\boldsymbol w}\\
0&B_{\boldsymbol w}&0&Q_{\boldsymbol w}\\
0&0&B_{\boldsymbol w}&Q_{\boldsymbol w}\\
2P_{\boldsymbol w}^T&Q_{\boldsymbol w}^T&
Q_{\boldsymbol w}^T&F_{\boldsymbol w}
\end{pmatrix}
=
\det
\begin{pmatrix}
B_{\boldsymbol w}&0&0&Q_{\boldsymbol w}\\
0&B_{\boldsymbol w}&0&Q_{\boldsymbol w}\\
0&0&A_{\boldsymbol w}&P_{\boldsymbol w}\\
Q_{\boldsymbol w}^T&Q_{\boldsymbol w}^T&
2P_{\boldsymbol w}^T&F_{\boldsymbol w}
\end{pmatrix}.
\]
A second application of \cref{lem:two-copy} therefore gives
\[
\det
\begin{pmatrix}
B_{\boldsymbol w}&0&0&Q_{\boldsymbol w}\\
0&B_{\boldsymbol w}&0&Q_{\boldsymbol w}\\
0&0&A_{\boldsymbol w}&P_{\boldsymbol w}\\
Q_{\boldsymbol w}^T&Q_{\boldsymbol w}^T&
2P_{\boldsymbol w}^T&F_{\boldsymbol w}
\end{pmatrix}
=
\det(B_{\boldsymbol w})
\det
\underbrace{\begin{pmatrix}
B_{\boldsymbol w}&0&Q_{\boldsymbol w}\\
0&A_{\boldsymbol w}&P_{\boldsymbol w}\\
2Q_{\boldsymbol w}^T&2P_{\boldsymbol w}^T&F_{\boldsymbol w}
\end{pmatrix}}_{R_{\boldsymbol w}}.
\]
Therefore,
\begin{equation}\label{eq:det-factorization}
\det M_{\boldsymbol w}
=
\det(A_{\boldsymbol w})\det(B_{\boldsymbol w})\det R_{\boldsymbol w},
\end{equation}
where $\det R_{\boldsymbol w} \in \mb{Z}$ since $R_{\boldsymbol w}$ is an integer matrix. The conclusion now follows since
\[
\det(A_{\boldsymbol w})=K_m(w_1,\dots,w_m),
    \qquad
\det(B_{\boldsymbol w})=K_{m-1}(w_1,\dots,w_{m-1}). \qedhere
\]
\end{proof}

\section{Proof of \cref{thm:main}}\label{sec:counting}

We now complete the proof by counting. For convenience of notation, for $m\geq 3$ and $q\geq 1$, let
\[\mc{W}_{m,q} := \{2,\dots, q+1\}^m.\]
The determinant factorization (\cref{prop:simple-graph-determinant-factor}) assigns
to each \(\boldsymbol w \in \mc{W}_{m,q}\) a divisor
\[
D_{\boldsymbol w} := K_m(w_1,\dots, w_m)K_{m-1}(w_1,\dots, w_{m-1})\mid \tau(G_{\boldsymbol w}).
\]
We first use the injectivity of the continuant pair map (\cref{lem:continuant-reconstruction}) and the divisor bound to show that  the integers \(D_{\boldsymbol w}\) take many distinct
values, and then use the divisor bound once more to show that these many
divisors cannot be supported on too few spanning-tree counts.

\begin{proposition}\label{prop:tree-count-lower-mq}
Fix $\eta>0$.  For all sufficiently large $m+q$, with $m\ge3$ and $q\ge1$, 
\[
\abs{\set{\tau(G_{\boldsymbol{w}}):\boldsymbol{w}\in\mc W_{m,q}}}
\ge
\frac{q^m}{(q+1)^{2\eta m}N^{\eta N}},
\]
where $N = N_{m,q} = 4m+q-1$.
\end{proposition}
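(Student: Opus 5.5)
We count in two stages, following the outline in the overview. First, let $\mathcal D=\{D_{\boldsymbol w}:\boldsymbol w\in\mc W_{m,q}\}$. By \cref{lem:continuant-reconstruction}, the map $\boldsymbol w\mapsto(K_m(w_1,\dots,w_m),K_{m-1}(w_1,\dots,w_{m-1}))$ is injective on $\mc W_{m,q}$. So the number of words $\boldsymbol w$ with $D_{\boldsymbol w}=D$ is at most the number of ordered factorizations $D=ab$, which is $\sigma_0(D)$. By \cref{lem:continuant-product-bound}, $D_{\boldsymbol w}\le (q+1)^m(q+1)^{m-1}\le (q+1)^{2m}$. Applying \cref{lem:divisor-bound} with $X=(q+1)^{2m}$ gives $\sigma_0(D)\le (q+1)^{2\eta m}$. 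This requires $X\ge X_0(\eta)$, which holds once $m+q$ is large, since $(q+1)^{2m}\ge 2^{2m}$ and $q\ge1$; if $m$ stays bounded while $q\to\infty$, then $X$ still grows. Hence
\[
|\mathcal D|\ge \frac{q^m}{(q+1)^{2\eta m}}.
\]

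Second, we bound how many distinct $D$'s can divide one common value $T=\tau(G_{\boldsymbol w})$. Let $\mathcal T=\{\tau(G_{\boldsymbol w}):\boldsymbol w\in\mc W_{m,q}\}$. Every $D\in\mathcal D$ divides some $T\in\mathcal T$ by \cref{prop:simple-graph-determinant-factor}. Therefore
\[
|\mathcal D|\le \sum_{T\in\mathcal T}\sigma_0(T)\le |\mathcal T|\max_{T}\sigma_0(T).
\]
Each $T$ is a positive integer, since $G_{\boldsymbol w}$ is connected, and $T\le N^{N-2}\le N^N$ by Cayley's formula. Applying \cref{lem:divisor-bound} with $X=N^N$ gives $\sigma_0(T)\le N^{\eta N}$, valid once $N^N\ge X_0(\eta)$. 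This holds for $m+q$ large because $N=4m+q-1\to\infty$. Combining the two displays gives
\[
|\mathcal T|\ge \frac{q^m}{(q+1)^{2\eta m}N^{\eta N}}.
\]

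The main point requiring care is the first stage: the multiplicity of $D$ must be controlled by ordered factorizations $(a,b)$, not by words directly, and this is exactly where the injectivity in \cref{lem:continuant-reconstruction} is used. The remaining bookkeeping consists of checking that both applications of the divisor bound fall in the range $X\ge X_0(\eta)$ uniformly as $m+q\to\infty$, and that $1\le M\le X$ holds. The latter follows from \cref{lem:continuant-product-bound} for the $D$'s and from positivity of $\tau$ for connected graphs for the $T$'s.
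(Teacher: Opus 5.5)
Your proof is correct and follows the paper's argument essentially step for step. You bound the fibers of $\boldsymbol w\mapsto D_{\boldsymbol w}$ by ordered factorizations, using \cref{lem:continuant-reconstruction} and the divisor bound with $X=(q+1)^{2m}$, and then apply a second divisor bound with $X=N^N$; your union bound $|\mathcal D|\le\sum_{T}\sigma_0(T)$ is equivalent to the paper's choice of one representative word per value of $D$, and your explicit check that $X\ge X_0(\eta)$ and $1\le M\le X$ in both applications fills in details the paper leaves implicit.
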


\begin{proof}
We first show that the divisors \(D_{\boldsymbol w}\) themselves
take many distinct values.  By definition,
\[
D_{\boldsymbol w}
=
K_m(w_1,\dots,w_m)K_{m-1}(w_1,\dots,w_{m-1}).
\]
By \cref{lem:continuant-reconstruction}, the two factors in this product,
taken as an ordered pair, determine the word \(\boldsymbol w\).  Hence, for a
fixed integer \(D\), the number of vectors $\boldsymbol{w} \in \mc{W}_{m,q}$ with \(D_{\boldsymbol w}=D\) is at
most the number of ordered factorizations
\[
D=AB,
\]
namely at most \(\sigma_0(D)\). Moreover, by
\cref{lem:continuant-product-bound},
\[
D_{\boldsymbol w}\le (q+1)^m(q+1)^{m-1}\le (q+1)^{2m}.
\]
Applying the divisor bound (\cref{lem:divisor-bound}) with \(X=(q+1)^{2m}\), we get
\[
\sigma_0(D)\le (q+1)^{2\eta m}
\]
for all sufficiently large \(m+q\). Thus every fiber of the map
\[
\boldsymbol w\mapsto D_{\boldsymbol w}
\]
has size at most \((q+1)^{2\eta m}\).  Since
\(|\mc W_{m,q}|=q^m\), it follows that
\begin{equation}\label{eq:number-distinct-D}
\abs{\set{D_{\boldsymbol w}:\boldsymbol w\in\mc W_{m,q}}}
\ge
\frac{q^m}{(q+1)^{2\eta m}}.
\end{equation}

Now choose one representative word \(\boldsymbol w_D\) for each distinct value
\(D\) counted in \eqref{eq:number-distinct-D}, and set
\[
T(D):=\tau(G_{\boldsymbol w_D}).
\]
By \cref{prop:simple-graph-determinant-factor}, \(D\) is a positive divisor of
\(T(D)\).  Hence, for any fixed integer \(T\), there are at most
\(\sigma_0(T)\) values of \(D\) with \(T(D)=T\).

We bound this multiplicity uniformly.  Since every \(G_{\boldsymbol w}\) has
\(N\) vertices, every spanning tree of \(G_{\boldsymbol w}\) is a spanning tree
of \(K_N\), and Cayley's formula \cite{cayley} gives
\[
T(D)\le \tau(K_N)=N^{N-2}\le N^N.
\]
Applying \cref{lem:divisor-bound} with \(X=N^N\), we have
\[
\sigma_0(T(D))\le N^{\eta N}
\]
for all sufficiently large \(m+q\).  Combining this with
\eqref{eq:number-distinct-D}, the number of distinct spanning-tree counts is
at least
\[
\frac{q^m}{(q+1)^{2\eta m}N^{\eta N}}.
\]
This proves the proposition.
\end{proof}

We now take \(q=\lfloor \eps m\rfloor\), with \(0<\eps<1\) fixed.  Then
\(N=4m+q-1=(4+\eps)m+O(1)\), and
\[
\log q,\ \log(q+1),\ \log N=\log m+O_\eps(1).
\]
Thus \cref{prop:tree-count-lower-mq} gives, for every fixed \(\eta>0\) and all
sufficiently large \(m\),
\begin{equation}\label{eq:asymptotic-m}
\log\abs{\set{\tau(G_{\boldsymbol w}):\boldsymbol w\in\mc W_{m,q}}}
\ge
\paren{1-(6+\eps)\eta}m\log m-O_{\eps,\eta}(m).
\end{equation}

Finally, we finish the proof of the main theorem.

\begin{proof}[Proof of \cref{thm:main}]
 Fix \(0<c<1/4\), and choose \(\eps\in(0,1)\) and
\(\eta>0\) so that
\[
\alpha:=\frac{1-(6+\eps)\eta}{4+\eps}>c.
\]
For large \(n\), choose \(m\) maximal such that
\[
N_m:=4m+\lfloor\eps m\rfloor-1\le n.
\]
Then \(m\to\infty\) with \(n\), and maximality gives
\[
n<N_{m+1}\le (4+\eps)(m+1)-1,
\]
so
\begin{equation}\label{eq:mlogm-vs-nlogn}
m\log m\ge \frac{1}{4+\eps}n\log n-O_\eps(n).
\end{equation}

Using the padded graphs \(G_{\boldsymbol w}^{(n)}\), whose spanning-tree counts
agree with those of \(G_{\boldsymbol w}\) (see
\eqref{eq:padding-preserves-tau}), \eqref{eq:asymptotic-m} and
\eqref{eq:mlogm-vs-nlogn} imply
\[
\log\abs{ \mc{T}_n}
\ge
\alpha n\log n-O_{\eps,\eta}(n) \geq cn \log n
\]
for all sufficiently large
\(n\). 
\end{proof}

\bibliography{main}
\bibliographystyle{amsplain0}

\end{document}